\theoremstyle{definition}
\newtheorem{schur algebra}[subsection]{Definition}
\newtheorem{schur-weyl}[subsection]{Definition}
\newtheorem{strong epimorphism}[subsection]{Definition}
\newtheorem{weight}[subsection]{Definition}
	\theoremstyle{plain}
	\newtheorem{integraldomainspsi}[subsection]{Example}
\newtheorem{strong epimorphism properties}[subsection]{Proposition}
\newtheorem{eij action}[subsection]{Lemma}
\newtheorem{chi action}[subsection]{Corollary}
\newtheorem{bryant}[subsection]{Theorem}
\newtheorem{Schur-weyl duality iff rho}[subsection]{Corollary}
\newtheorem{surjectivity of rho}[subsection]{Theorem}
\newtheorem{schur weyl duality for finite fields can fail}[subsection]{Theorem}
\newtheorem{schur weyl duality integers}[subsection]{Corollary}
\theoremstyle{remark}
\newtheorem{exemplos}[subsection]{Remarks}
\newtheorem{remark about finite fields}[subsection]{Remarks}
\newtheorem{remark about integers}[subsection]{Remarks}
\title{\vspace*{-3cm}\textbf{{\footnotesize [Note: This is an Accepted Manuscript of an article published by Taylor \& Francis in Communications in Algebra on 13 Nov 2018, available online: http://www.tandfonline.com/10.1080/00927872.2018.1513010 .]}}\\[2cm]
\LARGE	Schur--Weyl duality over commutative rings}
\author{ Tiago Cruz} 
\date{}
\newcommand{\Address}{{
		\bigskip
		\footnotesize
		
		TIAGO CRUZ,\par \textsc{Institute of Algebra and Number Theory}\par \textsc{University of Stuttgart,}\par \textsc{Pfaffenwaldring 57, 70569 Stuttgart, Germany,}\par\nopagebreak
		\textit{E-mail address}, T.~Cruz: \texttt{tiago.cruz@mathematik.uni-stuttgart.de}}}
\begin{document}

\maketitle

\begin{abstract}
The classical case of Schur--Weyl duality states that the actions of the group algebras of $GL_n$ and $S_d$ on the $d^{th}$-tensor power of a free module of finite rank centralize each other. We show that Schur--Weyl duality holds for commutative rings where enough scalars can be chosen whose non-zero differences are invertible. This implies all the known cases of Schur--Weyl duality so far. We also show that Schur--Weyl duality fails for $\mathbb{Z}$ and for any finite field when $d$ is sufficiently large.
\end{abstract}

\section{Introduction}
 Schur--Weyl duality is a connection between the general linear group and the symmetric group.

 More specifically, consider $n, d\in \mathbb{N}$ and let $V=R^n$ be the free module of rank $n$ over a commutative ring with identity $R$. 
 
 The symmetric group $S_d$ acts on the $d^{th}$-tensor power, $V^{\otimes^d}=V\otimes_R\cdots\otimes_R V$, of the module $V$ by place permutation, that is,
 $$\sigma(v_1\otimes\cdots\otimes v_d)=v_{\sigma^{-1}(1)}\otimes\cdots\otimes v_{\sigma^{-1}(d)}, \ \sigma\in S_d, \ v_i\in V.$$ 
	\begin{schur algebra} \citep{green}
	The subalgebra $End_{RS_d}\left( V^{\otimes ^d}\right)$ of the endomorphism algebra $End_R\left( V^{\otimes^d}\right)$ is called the \emph{Schur algebra}. We will denote it by $S_R(n, d)$.
\end{schur algebra}
On the other hand, the general linear group acts on $V$ by multiplication, and thus on the tensor product $V^{\otimes^d}$ by the diagonal action, that is,
$$g(v_1\otimes\cdots\otimes v_d)=gv_1\otimes\cdots\otimes g v_d, \ g\in GL_n(R), \ v_i\in V.$$
These two actions commute, so, by extending these actions to the group algebras, one gets two natural homomorphisms:
$$\rho\colon RGL_n(R)\rightarrow S_R(n, d), \ \psi\colon RS_d\rightarrow End_{RGL_n(R)}\left( V^{\otimes^d}\right). $$

\begin{schur-weyl}
	We say that \emph{Schur--Weyl duality} holds if the two algebra homomorphisms $\rho$ and $\psi$ are surjective.
\end{schur-weyl}
In other words, the image of each homomorphism is the centralizer algebra  for the other action.

In case $R=\mathbb{K}$ is an infinite field, Green, De Concini and Procesi proved that Schur--Weyl duality holds (see \citep{green, de1976characteristic}). Another approach assuming only that Schur--Weyl duality holds for $\mathbb{C}$, which is due to Schur, can be found in \citep{doty2009schur}.  Benson and Doty showed in \citep{benson2009schur} that  Schur--Weyl duality holds for finite fields with order strictly larger than $d$.

When Schur--Weyl duality holds then the category of $S_R(n,d)$-modules is equivalent to the category of homogeneous polynomial representation of degree $d$ of $GL_n(R)$  (see \citep{krause2013polynomial}). This means, that if Schur--Weyl duality holds, one can replace the group algebra $RGL_n(R)$ in the study of homogeneous polynomial representations of degree $d$ of $GL_n(R)$ by the Schur algebra $S_R(n, d)$.

The aim of the present paper is to study Schur--Weyl duality when the ground ring is any commutative ring. We give a sufficient condition for Schur--Weyl duality to hold. This criterion when applied to fields is exactly the one Benson and Doty obtained in \citep{benson2009schur}.

For instance we have,

\begin{surjectivity of rho}
	Let $R$ be a commutative ring with identity.
If $R$ contains a set with more than $d$ elements whose non-zero differences are invertible then Schur--Weyl duality holds for the $d^{th}$-tensor power of a free module with finite rank over $R$.
\end{surjectivity of rho}

In case $R$ is a finite field, we also present a formula involving the vector space dimension and the parameter $d$, to show failure of Schur--Weyl duality in certain cases. This result is contained in Theorem \ref{schur weyl duality for finite fields can fail}.

Theorem \ref{schur weyl duality all cases} and Theorem \ref{schur weyl duality for finite fields can fail} are the main contributions of this paper. 

\section{Some observations on strong epimorphisms}
 First, let us recall the definition of strong epimorphism introduced by Krause in \citep{krause2013polynomial}.
\begin{strong epimorphism}\label{strong epimorphism}
	
	Let $\phi\colon A\rightarrow B$ be an $R$-algebra homomorphism. We say that $\phi$ is a \emph{strong epimorphism} if
	\begin{enumerate}[(i)]
		\item $\phi$ is an epimorphism of $R$-algebras in the categorical sense, that is: For any pair of $R$-algebra homomorphisms $\psi_1, \psi_2\colon B\rightarrow C$, if $\psi_1\circ \phi=\psi_2\circ \phi$, then $\psi_1=\psi_2$.
		\item Let $r\colon A\rightarrow End_R(M)$ be a representation. If there exists an $R$-linear map $h\colon B\rightarrow End_R(M)$ such that $r=h\circ \phi$ then there exists a representation $s\colon B\rightarrow End_R(M)$ such that $r=s\circ \phi$. 
	\end{enumerate}
\end{strong epimorphism}

As noticed in \citep{krause2013polynomial}, the inclusion $\mathbb{Z}\hookrightarrow \mathbb{Q}$ is a strong epimorphism. But clearly it is not surjective. Therefore, one should be aware that strong epimorphism introduced by H. Krause is distinct from strong epimorphism notion in the categorical sense.

 From now on, only the notion defined in (\ref{strong epimorphism}) will be used.

 Strong epimorphism is a stronger notion than epimorphism: 

In fact, for any commutative ring $R$, consider the natural monomorphism $\phi\colon R[x]\rightarrow R\mathbb{Z}$. As $\phi(x)=v_1$ and $v_iv_j=v_{i+j}$, where $v_i$ denotes the basis element $i\in \mathbb{Z}$ of the group algebra $R\mathbb{Z}$, one gets that $\phi$ is an epimorphism. 
Let $r\colon R[x]\rightarrow End_R\left(R[x] \right)$ be the regular representation. As $r(x)$ does not have an inverse, there is not a representation $h\colon R\mathbb{Z}\rightarrow End_R\left(R[x] \right)$ such that $h\circ \phi=r$. Yet, we can find an $R$-linear map $h\colon R\mathbb{Z}\rightarrow End_R\left(R[x] \right)$ defined by $h(v_i)=r(x^{|i|}), \ i\in \mathbb{Z}$. Thus, $\phi$ cannot be a strong epimorphism.

Here are some additional properties of strong epimorphisms.

\begin{strong epimorphism properties}
	Let $f\colon B\rightarrow C$ and $g\colon A\rightarrow B$ be an $R$-algebra homomorphisms.
	\begin{enumerate}[(i)]
		\item If $g$ is surjective and $f$ is a strong epimorphism then $f\circ g$ is a strong epimorphism.
		\item If $g$ is a strong epimorphism and $f$ is an isomorphism then $f\circ g$ is a strong epimorphism.
		\item If $g$ is an epimorphism and $f\circ g$ is a strong epimorphism then $f$ is a strong epimorphism.
	\end{enumerate}
\end{strong epimorphism properties}
\begin{proof}
	Suppose that $g$ is surjective and $f$ is a strong epimorphism. It is clear that $f\circ g$ is an epimorphism. 
	
	Let $r\colon A\rightarrow End_R(M)$ be a representation. Assume that $s\colon C\rightarrow End_R(M)$ is an $R$-linear map such that $s\circ f\circ g=r$. As $g$ is surjective, $s\circ f$ is a representation. As $f$ is a strong epimorphism there is a representation $h\colon C\rightarrow End_R(M)$ such that $h\circ f=s\circ f$. Moreover $h=s$.
	
	Now suppose that $g$ is a strong epimorphism and $f$ is an isomorphism. Let $r\colon A\rightarrow End_R(M)$, $s\colon C\rightarrow End_R(M)$ be a representation and an $R$-linear map, respectively, such that $s\circ f\circ g=r$. As $g$ is a strong epimorphism, there is a representation, $t\colon B\rightarrow End_R(M)$ such that $t\circ g=r$. Considering the representation $t\circ f^{-1}$, $(ii)$ follows.
	
	Finally, assume that $f\circ g$ is a strong epimorphism and $g$ is an epimorphism. It is also clear that $f$ is an epimorphism. Let $t\colon B\rightarrow End_R(M)$ be a representation and let $s\colon C\rightarrow End_R(M)$ be an $R$-linear map such that $s\circ f=t$. Thus, $s\circ f\circ g=t\circ g$ is a representation. By hypothesis, there is a representation $p\colon C\rightarrow End_R(M)$ such that $p\circ f\circ g=t\circ g$. As $g$ is epimorphism then $(iii)$ follows.
\end{proof}

\section{Generalization of Schur--Weyl duality to commutative rings}
In this section, we aim to extend the work of Benson and Doty to commutative rings with the same approach as for finite fields. Moreover, Corollary 4.4 of \citep{benson2009schur}, which says that Schur--Weyl duality holds over finite fields sufficiently large, is crucial to our aim. 

Consider the Lie group $GL_n(\mathbb{C})$ of invertible matrices over $\mathbb{C}$ and $\mathfrak{gl}_n(\mathbb{C})$ its Lie algebra. Let $U$ be the enveloping algebra of the Lie algebra $\mathfrak{gl}_n(\mathbb{C})$.

Thus, $U$ is the associative $\mathbb{C}$-algebra with generators $e_{i, j}, \ 1\leq i, j\leq n$, which satisfy the relation
\begin{align*}
e_{i, j}e_{a, b}-e_{a, b}e_{i, j}=\delta_{a, j}e_{i, b}-\delta_{i, b}e_{a, j}, \quad i, j, a, b=1, \ldots, n.
\end{align*}
Let $V$ be a finite dimensional complex vector space with basis $\{v_1, \ldots, v_n\}$.
Let $\mathfrak{gl}(V)$ be the Lie algebra whose underlying vector space is $End_{\mathbb{C}}(V)$ together with the Lie bracket $[f, g]:=f\circ g-g\circ f$,\linebreak $f,g\in End_{\mathbb{C}}(V)$. 

Consider the representation of the Lie algebra $\mathfrak{gl}_n(\mathbb{C})$ on $V$, $r\colon \mathfrak{gl}_n(\mathbb{C})\rightarrow \mathfrak{gl}(V)$, given by \linebreak $r(e_{i, j})(v_k)=\delta_{j, k}v_i$, $1\leq i, j, k\leq n$.
Thus we have a representation of $\mathfrak{gl}_n(\mathbb{C})$ on $V^{\otimes^d}$, $r\otimes\cdots\otimes r$, given by 
 \begin{align*} \hspace*{-0.9cm}
\left( r\otimes\cdots\otimes r\right) (e_{i, j})(v_{k_1}\otimes\cdots\otimes v_{k_d})=\delta_{j, k_1}v_i\otimes\cdots\otimes v_{k_d}+\cdots+\delta_{j, k_d}v_{k_1}\otimes\cdots\otimes v_{i}, \: 1\leq i, j, k_1, \ldots, k_d\leq n.
\end{align*}
As the category of all representations of $\mathfrak{gl}_n(\mathbb{C})$ is equivalent to the abelian category of all left modules over $U$, one gets $V^{\otimes^d}$ as $U$-module with
\begin{align*}
e_{i, j}(v_{k_1}\otimes\cdots\otimes v_{k_d})=\delta_{j, k_1}v_i\otimes\cdots\otimes v_{k_d}+\cdots+\delta_{j, k_d}v_{k_1}\otimes\cdots\otimes v_{i}, \quad 1\leq i, j, k_1, \ldots, k_d\leq n.
\end{align*}

Let $U_{\mathbb{Z}}'$ be the subring of $U$ generated by the elements $\dfrac{e_{i,j}^m}{m!}, \ 1\leq i\neq j\leq n, \ m\geq 0$ and let $U_{\mathbb{Z}}$ be the subring of $U$ generated by $U_{\mathbb{Z}}'$ and by the elements $\displaystyle\binom{e_{i,i}}{m}:=\dfrac{e_{i,i}(e_{i,i}-1_U)\cdots(e_{i, i}-(m-1)1_U)}{m!}$, $1\leq i\leq n$, $m\geq 0$.

We shall see how these elements act on $V^{\otimes^d}$. To see that, it is useful to recall the notion of weight of a simple tensor.

\begin{weight}\citep{benson2009schur}
	The weight of a simple tensor $v_{j_1}\otimes\cdots\otimes v_{j_d}$, denoted by $\omega(v_{j_1}\otimes\cdots\otimes v_{j_d})$, is the composition $(\lambda_1, \ldots, \lambda_n)$ of $d$ in at most $n$ parts where $\lambda_j=|\{1\leq \mu\leq n\colon i_{\mu}=j \}|$, $j=1, \ldots, n$.
\end{weight}

In the next lemma, the action of these elements on $V^{\otimes^d}$ will be computed explicitly, as the computation in the proof of Lemma 4.1 of \citep{benson2009schur} has a minor mistake but the result still holds. In there, the usual algebra action was used instead of the universal enveloping algebra action associated with the usual Lie algebra action.

\begin{eij action}\label{eij action}
	Let $v_{j_1}\otimes\cdots\otimes v_{j_d}$ be any basis element of $V^{\otimes^d}$. Consider $\lambda=\omega(v_{j_1}\otimes\cdots\otimes v_{j_d})$ and let $\{\varepsilon_1, \ldots, \varepsilon_n\}$ be the canonical basis of $\mathbb{Z}^n$.
	
	Then, for any $m\geq 0$ and $1\leq i\neq j\leq n$, 
	\begin{align*}
	\dfrac{e_{i, j}^m}{m!}(v_{j_1}\otimes\cdots\otimes v_{j_d})&=\begin{cases} 
	 \text{sum of } \binom{\lambda_j}{m}  \text{ distinct simple tensors} \\ \text{written as } v_{k_1}\otimes\cdots\otimes v_{k_d}, \ \footnotesize{ k_l\in \{j_l, i\}}, \quad & \text{if } \lambda_j\geq m\\ \scriptstyle{1\leq l \leq d},\textstyle  \text{ with weight } \lambda+m\varepsilon_i-m\varepsilon_j 
	\\[2.5mm] \hfil  0,& \text{otherwise} 
	\end{cases};\\
	\binom{e_{i, i}}{m}(v_{j_1}\otimes\cdots\otimes v_{j_d})&=\binom{\delta_{i, j_1}+\cdots+\delta_{i, j_d}}{m}(v_{j_1}\otimes\cdots\otimes v_{j_d})=\binom{\lambda_i}{m}(v_{j_1}\otimes\cdots\otimes v_{j_d}).
	\end{align*}
\end{eij action}
\begin{proof}
	Fix $1\leq i\neq j\leq n$. We will prove the formulas by induction on $m$.
	
	Consider $m=1$. If $\lambda_j=0$ then $\delta_{j, j_1}=\ldots=\delta_{j, j_d}=0$, therefore $e_{i,j}(v_{j_1}\otimes\cdots\otimes v_{j_d})=0$. If $\lambda_{j}\geq 1$ then there exist $\lambda_j$ numbers $a\in [1, d]$ such that $\delta_{j, j_a}=1$, that is, $e_{i,j}(v_{j_1}\otimes\cdots\otimes v_{j_d})$ is the sum of $\lambda_j$ simple tensors $v_{k_1}\otimes\cdots\otimes v_{k_d}$, $k_l\in\{j_l, i\}$, $1\leq l\leq d$, with weight $\lambda+\varepsilon_i-\varepsilon_j$.
	\begin{align*}
	\binom{e_{i,i}}{1}(v_{j_1}\otimes\cdots\otimes v_{j_d})
	&=e_{i, i}(v_{j_1}\otimes\cdots\otimes v_{j_d})= \delta_{i, j_1}v_{i}\otimes\cdots\otimes v_{j_d}+\cdots+\delta_{i, j_d}v_{j_1}\otimes\cdots\otimes v_i 
	\\&=(\delta_{i, j_1}+\cdots+\delta_{i, j_d})(v_{j_1}\otimes\cdots\otimes v_{j_d}).
	\end{align*}
	Suppose now that $m>1$ and the results holds for $m-1$. 
	
	If $\lambda_j<m-1$ then $\dfrac{e_{i, j}^m}{m!}(v_{j_1}\otimes\cdots\otimes v_{j_d})=\dfrac{e_{i, j}}{m}(0)=0$, by the induction hypothesis.
	
	If $\lambda_j\geq m-1$ then \begin{align*}
	\dfrac{e_{i, j}^m}{m!}(v_{j_1}\otimes\cdots\otimes v_{j_d})&=\dfrac{e_{i, j}}{m}\left(\sum_{\substack{k_l\in \{j_l, \ i \}, \ 1\leq l\leq d\\ \omega(v_{k_1}\otimes\cdots\otimes v_{k_d})=\lambda+(m-1)\varepsilon_i-(m-1)\varepsilon_j}}v_{k_1}\otimes\cdots\otimes v_{k_d} \right) \\&=\frac{1}{m}\sum_{\substack{k_l\in \{j_l, \ i \}, \ 1\leq l\leq d\\ \omega(v_{k_1}\otimes\cdots\otimes v_{k_d})=\lambda+(m-1)\varepsilon_i-(m-1)\varepsilon_j}}e_{i, j}(v_{k_1}\otimes\cdots\otimes v_{k_d}).
	\end{align*}
	If $\lambda_j= m-1$ then  $\delta_{k_l, j}=0, $ $1\leq l\leq d$, thus $\dfrac{e_{i, j}^m}{m!}(v_{j_1}\otimes\cdots\otimes v_{j_d})=0$.
	
	Suppose now $\lambda_j\geq m$. By the previous computation, $\dfrac{e_{i, j}^m}{m!}(v_{j_1}\otimes\cdots\otimes v_{j_d})$ is the sum of $\binom{\lambda_j}{m-1}$ elements written as $e_{i, j}(v_{k_1}\otimes\cdots\otimes v_{k_d})$. Each element is the sum of $\lambda_j-(m-1)$ simple tensors with weight $\lambda+m\varepsilon_i-m\varepsilon_j$. As a result, $\dfrac{e_{i, j}^m}{(m-1)!}(v_{j_1}\otimes\cdots\otimes v_{j_d})$ is the sum of $\binom{\lambda_j}{m-1}(\lambda_j-(m-1))$ simple tensors with weight $\lambda+m\varepsilon_i-m\varepsilon_j$. However, in $\dfrac{e_{i, j}^m}{(m-1)!}(v_{j_1}\otimes\cdots\otimes v_{j_d})$ each simple tensor has multiplicity $m$.
	So, $\dfrac{e_{i, j}^m}{m!}(v_{j_1}\otimes\cdots\otimes v_{j_d})$ is the sum of $\binom{\lambda_j}{m}$ distinct simple tensors with weight $\lambda+m\varepsilon_i-m\varepsilon_j$. 
	
	It remains to show the inductive step for $\displaystyle\binom{e_{i, i}}{m}$:
	\begin{align*}
	\binom{e_{i,i}}{m}(v_{j_1}\otimes\cdots\otimes v_{j_d})&=\dfrac{e_{i,i}(e_{i,i}-1_U)\cdots(e_{i, i}+(-m+1)1_U)}{m!}(v_{j_1}\otimes\cdots\otimes v_{j_d})
	\\&= \dfrac{e_{i,i}(e_{i,i}-1_U)\cdots(e_{i, i}+(-m+2)1_U)}{(m-1)!m}(e_{i, i}+(-m+1)1_U)(v_{j_1}\otimes\cdots\otimes v_{j_d}) \\&=\frac{1}{m}\binom{e_{i, i}}{m-1}\left( \delta_{i, j_1}+\cdots+\delta_{i, j_d}-(m-1)\right) (v_{j_1}\otimes \cdots\otimes v_{j_d})
	\\&=\dfrac{\delta_{i, j_1}+\cdots+\delta_{i, j_d}-(m-1)}{m}\binom{e_{i, i}}{m-1}(v_{j_1}\otimes \cdots\otimes v_{j_d})
	\\&=\dfrac{\delta_{i, j_1}+\cdots+\delta_{i, j_d}-(m-1)}{m}\binom{\delta_{i, j_1}+\cdots+\delta_{i, j_d}}{m-1}(v_{j_1}\otimes \cdots\otimes v_{j_d}) 
	\\&= \binom{\delta_{i, j_1}+\cdots+\delta_{i, j_d}}{m}(v_{j_1}\otimes \cdots\otimes v_{j_d}). \tag*{\qedhere}\end{align*}
\end{proof}
By direct computation, we see that the action of $U$ on $V^{\otimes^d}$ commutes with the action of $S_d$ on $V^{\otimes^d}$.
Let $V_{\mathbb{Z}}$ be the free $\mathbb{Z}$-module with basis $\{v_1, \ldots, v_n\}$. By  Lemma \ref{eij action}, $\left( V_{\mathbb{Z}}\right)^{\otimes^d}$ is an  $U_{\mathbb{Z}}$-module.

Therefore, for any commutative ring with identity, $R\otimes_{\mathbb{Z}}V_{\mathbb{Z}}^{\otimes^d}$ has the structure of $R\otimes_{\mathbb{Z}}U_{\mathbb{Z}}$-module. Set $U_R=R\otimes_{\mathbb{Z}}U_{\mathbb{Z}}$,  $U_R'=R\otimes_{\mathbb{Z}}U_{\mathbb{Z}}'$ and $V_R=R\otimes_{\mathbb{Z}} V_{\mathbb{Z}}$. So, one gets the representation $\chi\colon U_R\rightarrow End_R\left( V_R^{\otimes^d}\right)$ associated with the module $V_R^{\otimes^d}$.
As the actions of $U_R$ and $S_d$ on $V_R^{\otimes^d}$ commute, one obtains the algebra homomorphisms $$\chi_R\colon U_R\rightarrow End_{RS_d}\left( V_R^{\otimes^d}\right), \   \chi'_R\colon U_R'\rightarrow End_{RS_d}\left( V_R^{\otimes^d}\right), $$ with $\chi'_R:=(\chi_R)_{|_{U_R'}}$.  

\begin{chi action}\citep[Lemma 4.1]{benson2009schur}
	For any $m>d$, $\chi_R\!\left(\! 1\otimes\dfrac{e_{i, j}^m}{m!}\!\right) \!=\chi_R\!\left( 1\otimes\displaystyle\binom{e_{i,i}}{m}\right) \!=0$, $ 1\leq i\neq j\leq n$. 
\end{chi action}
Using the basis $\{1\otimes v_1, \ldots, 1\otimes v_n\}$ of $V_R$ one gets, as analogue of $\rho$ and $\psi$, the algebra homomorphisms $$\rho_{R}\colon RGL(V_{R})\rightarrow End_{RS_d}\left( V_{R}^{\otimes^d}\right), \ \psi_R\colon RS_d\rightarrow End_{U_R}\left( V_{R}^{\otimes^d}\right).$$
It is clear that $\rho_R$ is surjective if and only if $\rho\colon RGL_n(R)\rightarrow S_R(n, d)$ is surjective. So, we will focus on $\rho_R$.

For finite fields $R=\mathbb{K}$ with $|\mathbb{K}|>d$, Benson and Doty showed that $\rho_{\mathbb{K}}$ is surjective \citep[Theorem 4.3]{benson2009schur}. By Remark 4.6 of their article, for any commutative ring $R$ the maps
\begin{align}
\chi_R'\colon U_R'\rightarrow End_{RS_d}\left( V_R^{\otimes^d}\right), \ \psi_R\colon RS_d\rightarrow End_{U_R}\left( V_R^{\otimes^d}\right) \label{results from bd}
\end{align} are surjective for any $n, d\in \mathbb{N}$.

We note the following improvement of a version of Schur--Weyl duality \citep[Lemma 2.4]{bryant2009lie}.

\begin{bryant}
	Let $R$ be a commutative ring with identity. Then the algebra homomorphism \linebreak$\psi\colon RS_d\rightarrow End_{S_R(n, d)}\left( (R^n)^{\otimes^d}\right)$ is surjective for any $n, d\in \mathbb{N}$.
\end{bryant}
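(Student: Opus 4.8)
The statement follows formally from the two surjectivity facts recorded in \eqref{results from bd}, once one observes that the subalgebra $U_R'$ already maps onto the whole Schur algebra. Here is the plan.

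First I would check that $\chi_R\colon U_R\to End_R\bigl(V_R^{\otimes^d}\bigr)$ has image exactly $S_R(n,d)=End_{RS_d}\bigl(V_R^{\otimes^d}\bigr)$. One inclusion is automatic, since the actions of $U_R$ and of $S_d$ on $V_R^{\otimes^d}$ commute, so $\chi_R(U_R)\subseteq End_{RS_d}\bigl(V_R^{\otimes^d}\bigr)$ (this is built into the definition of $\chi_R$). For the reverse inclusion, note that $U_R'\subseteq U_R$ and $\chi_R'=(\chi_R)_{|_{U_R'}}$, so $\chi_R(U_R)\supseteq\chi_R'(U_R')$; and by \eqref{results from bd} the map $\chi_R'$ is surjective onto $End_{RS_d}\bigl(V_R^{\otimes^d}\bigr)$. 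Hence $\chi_R(U_R)=S_R(n,d)$, and consequently the centralizer of the $U_R$-action coincides with the centralizer of the Schur algebra:
\[
End_{U_R}\bigl(V_R^{\otimes^d}\bigr)=End_{\chi_R(U_R)}\bigl(V_R^{\otimes^d}\bigr)=End_{S_R(n,d)}\bigl(V_R^{\otimes^d}\bigr).
\]

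Next I would invoke the second half of \eqref{results from bd}: the homomorphism $\psi_R\colon RS_d\to End_{U_R}\bigl(V_R^{\otimes^d}\bigr)$ is surjective for all $n,d\in\mathbb{N}$. In view of the identification just obtained, this says precisely that $\psi_R\colon RS_d\to End_{S_R(n,d)}\bigl(V_R^{\otimes^d}\bigr)$ is surjective. Finally, the $R$-linear isomorphism $V_R=R\otimes_{\mathbb{Z}}V_{\mathbb{Z}}\cong R^n$ sending $1\otimes v_i$ to the $i$-th standard basis vector is compatible with the place-permutation action of $S_d$ (and with the $GL$-action), so it identifies $\psi_R$ with $\psi$ and $End_{S_R(n,d)}\bigl(V_R^{\otimes^d}\bigr)$ with $End_{S_R(n,d)}\bigl((R^n)^{\otimes^d}\bigr)$; surjectivity of $\psi$ follows, for arbitrary $n$ and $d$.

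There is no substantial obstacle here: the argument is pure bookkeeping once \eqref{results from bd} is in hand. The one point that deserves care is that \eqref{results from bd} asserts the surjectivity of $\chi_R'$ \emph{onto the full algebra} $End_{RS_d}\bigl(V_R^{\otimes^d}\bigr)$, which is exactly what forces the commutant of the $U_R$-action to collapse onto the commutant of $S_R(n,d)$; granting this, no hypothesis on $R$, $n$ or $d$ is needed, and that uniform generality is precisely where the statement improves on \citep[Lemma 2.4]{bryant2009lie}.
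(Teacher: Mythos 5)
Your argument is correct and is essentially identical to the paper's own proof: both use the surjectivity of $\chi_R'$ from \eqref{results from bd} to identify $End_{U_R}\bigl(V_R^{\otimes^d}\bigr)$ with $End_{S_R(n,d)}\bigl(V_R^{\otimes^d}\bigr)$, then apply the surjectivity of $\psi_R$ and the canonical $RS_d$-module isomorphism $V_R^{\otimes^d}\cong (R^n)^{\otimes^d}$. You have merely spelled out the bookkeeping in more detail than the paper does.
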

\begin{proof} Suppose $R$ a commutative ring with identity.
	By (\ref{results from bd}),
	\begin{align*}
	\psi_R(RS_d)=End_{U_R}\left(V_R^{\otimes^d} \right) =End_{\chi_R(U_R)}\left(V_R^{\otimes^d} \right) =End_{End_{RS_d}\left( V_R^{\otimes^d}\right) }\left( V_R^{\otimes^d}\right).
	\end{align*}
	Since $V_R^{\otimes^d}\cong (R^n)^{\otimes^d}$ as $RS_d$-modules, in the canonical way, it follows \begin{equation*}
	\psi(RS_d)= End_{S_R(n, d)}\left( (R^n)^{\otimes^d}\right). \tag*{\qedhere}
	\end{equation*}
\end{proof}

As a result, the argument given in \citep[Corollary 4.4]{benson2009schur} still holds for commutative rings.
\begin{Schur-weyl duality iff rho}
	Let $R$ be a commutative ring with identity. Then Schur--Weyl duality holds if and only if the algebra homomorphism $\rho\colon RGL_n(R)\rightarrow S_R(n, d)$ is surjective.
\end{Schur-weyl duality iff rho}
\begin{proof}
	Let $R$ be a commutative ring with identity.
	
	One implication is clear.
	
	Now suppose that $\rho\colon RGL_n(R)\rightarrow S_R(n, d)$ is a surjective map. Then,
	\begin{align*}
	\psi(RS_d)=End_{S_R(n, d)}\left( (R^n)^{\otimes^d}\right)=End_{\rho(RGL_n(R))}\left( (R^n)^{\otimes^d}\right)=End_{RGL_n(R)}\left( (R^n)^{\otimes^d}\right).
	\end{align*}
	Thus, $\psi$ is surjective and Schur--Weyl duality holds.
\end{proof}
Hence, the study of Schur--Weyl duality can be reduced to studying the surjectivity of $\rho$.
 When $R=\mathbb{K}$ is a field this result was already known. Following the work of Krause \citep{krause2013polynomial}, one sees that as a consequence of this result, Schur--Weyl duality holds if and only if the category of homogeneous polynomial representations of degree $d$ of $GL_n(\mathbb{K})$ is equivalent to the category of modules over the Schur algebra $S_{\mathbb{K}}(n, d)$.

Let $R^*$ be the set of all units of the commutative ring $R$. Then we have the following result.

\begin{surjectivity of rho}\label{schur weyl duality all cases}
	Let $n, d\in \mathbb{N}$ be natural numbers.
	Let $R$ be a commutative ring with identity that contains a set $S$ which has the following properties:
	\begin{enumerate}
		\item $\forall x, y\in S, \ x\neq y\implies x-y\in R^*$;
		\item $|S|>d$.
	\end{enumerate}
Then the algebra homomorphism $\rho\colon RGL_n(R)\rightarrow S_R(n, d)$ is surjective, that is, Schur--Weyl duality holds.
\end{surjectivity of rho}
\begin{proof}
	Suppose $R$ a commutative ring satisfying the above conditions.
	
	For any $t\in R$, $1\leq i, j\leq n$, set $E_{i, j}(t)=\mathrm{id}_{V_{R}}+t\chi_{R}(1\otimes e_{i, j})$.
	
	Note that a matrix $A\in GL_n(R)$ if and only if its determinant $\det(A)$ is a unit. So as \linebreak$\det[E_{i, j}(t)]_{\{1\otimes v_1, \ldots, 1\otimes v_n\}}=1$, one obtains that $E_{i, j}(t)\in GL(V_R), \ 1\leq i\neq j\leq n, \ t\in R$.
	
	The formula, used in \citep[Lemma 4.2 (6)]{benson2009schur} with $R=\mathbb{K}$ a field, \begin{align}
	\rho_{R}\left( E_{i, j}(t)\right)=\sum_{m=0}^d t^m \chi_{R}'\left( 1\otimes \dfrac{e_{i, j}^m}{m!}\right), \ 1\leq i\neq j\leq n, \ t\in R. \label{formula}
	\end{align} still holds for any commutative ring.
	
	By hypothesis, there are elements $t_0, \ldots, t_d\in S\subset R$ such that $t_q-t_p\in R^{*}$, $0\leq q<p\leq d$.
	Applying (\ref{formula}), one obtains
	\begin{align*}
\rho_{R}\left( E_{i, j}(t_k)\right)=\sum_{m=0}^d t_k^m \chi_{R}'\left( 1\otimes \dfrac{e_{i, j}^m}{m!}\right), \ 0\leq k\leq d, \ 1\leq i\neq j\leq n.
	\end{align*} 
	This system is represented by the matrix equation
	\begin{align*}
	\begin{bmatrix}
	1 & t_0 & \cdots & t_0^d\\ \vdots & \vdots & \vdots & \vdots \\ 1 & t_d & \cdots & t_d^d
	\end{bmatrix}\begin{bmatrix}
	\mathrm{id}_{V_{R}^{\otimes^d}} \\ \chi'_{R}\left( 1\otimes e_{i, j}\right) \\ \vdots \\ \chi'_{R}\left( 1\otimes \dfrac{e_{i, j}^d}{d!}\right) 
	\end{bmatrix}
	=\begin{bmatrix}
	\rho'_{R}(E_{i, j}(t_0))\\\vdots\\ \rho'_{R}(E_{i, j}(t_d))
	\end{bmatrix}.
	\end{align*}
	The matrix $[t_k^l]$ is a Vandermonde matrix, so its determinant is $\underset{0\leq q<p\leq d}{\prod}(t_q-t_p)\in R^*$. Thus, it is invertible, or in other words, there are scalars $\alpha_{m, l}\in R$ such that 
	\begin{align*}
	\chi_{R}'\left( 1\otimes\dfrac{e_{i, j}^m}{m!}\right)=\sum_{l=0}^d \alpha_{m, l}\rho_{R}(E_{i, j}(t_l)), \ 0\leq m\leq d, \ 1\leq i\neq j\leq n. 
	\end{align*}
	
	As $\left\lbrace \left( 1\otimes\dfrac{e_{i, j}^m}{m!} \right) \colon m\geq 0, \ 1\leq i\neq j\leq n \right\rbrace $ is a generator set for $U_R'$ and $\chi_{R}'\left( 1\otimes\dfrac{e_{i, j}^m}{m!}\right)=0$, when $m>d$, it follows that $im \chi'_R\subset im \rho_R$.
	
	By (\ref{results from bd}), $\rho_R$ is surjective.
	Therefore, $\rho$ is surjective.
\end{proof}
%

\begin{exemplos}\
	\begin{enumerate}[(1)]
		\item For any commutative ring $R$ with identity, Schur--Weyl duality holds for $d=1$. In fact, any commutative ring with identity contains the set $\{0, 1\}$.
		\item When $R=\mathbb{K}$ is a field with more than $d$ elements, we can choose $S=\mathbb{K}$. Therefore, the Theorem \ref{schur weyl duality all cases} contains all the known cases of the classical Schur--Weyl duality so far.
		\item Let $\mathbb{K}$ be a field with more than $d$ elements. Consider $R=\mathbb{K}[x_1,\ldots, x_k]$ the polynomial ring and fix an arbitrary natural $n$. We can apply the Theorem \ref{schur weyl duality all cases} with $S=\mathbb{K}$. Thus, Schur--Weyl duality holds for the polynomial ring $R$.
	\end{enumerate}
\end{exemplos}

\section{Some cases when Schur--Weyl duality fails}
In this final section, the aim is to present some situations where Schur--Weyl duality does not hold. We will show that the map $\psi\colon RS_d\rightarrow End_{RGL_n(R)}\left( (R^n)^{\otimes^d}\right)$ can be surjective in cases where Schur--Weyl duality fails. First, studying the map $\rho$, we can extend Theorem 5.1 of \citep{benson2009schur} to find situations where Schur--Weyl duality fails.

\begin{schur weyl duality for finite fields can fail}\label{schur weyl duality for finite fields can fail}
	Let $\mathbb{K}$ be a finite field and fix $n\in \mathbb{N}$. For $d$ sufficiently large,  Schur--Weyl duality fails. 
	
	More precisely,  Schur--Weyl duality fails for all $d$ that satisfy $\binom{n^2+d-1}{d}>\overset{n}{\underset{i=1}{\prod}}\left( |\mathbb{K}|^n-|\mathbb{K}|^{i-1}\right) $.
\end{schur weyl duality for finite fields can fail}
\begin{proof}
	If $\rho$ is surjective then $\dim_{\mathbb{K}}(\mathbb{K}GL_n(\mathbb{K}))\geq \dim_{\mathbb{K}}(S_{\mathbb{K}}(n, d))$. Computing a base for the $S_d$-invariants of $\left( End_{\mathbb{K}}\left( V\right)\right)^{\otimes^d}$, one can show that $\dim S_{\mathbb{K}}(n, d)=\binom{n^2+d-1}{d}$. 
	
	Therefore, if $\binom{n^2+d -1}{d}>|G|=\dim_{\mathbb{K}}\mathbb{K}GL_n(\mathbb{K})\geq \dim_{\mathbb{K}}\rho(\mathbb{K}G)$ then $\rho$ cannot be surjective.
	
	It is clear that $A\in GL_n(\mathbb{K})$ if and only if its columns are linearly independent in $\mathbb{K}^n$. We also note that any $n$-dimensional vector space over a finite field has $|\mathbb{K}|^n$ elements.
	
	The column $i$ must belong to the complement of the vector space generated by the columns indexed by $\{1, \ldots, i-1\}$. So for the column $i$ one has $|\mathbb{K}|^n-|\mathbb{K}|^{i-1}$ choices. So, the order of $GL_n(\mathbb{K})$ is $\overset{n}{\underset{i=1}{\prod}}\left( |\mathbb{K}|^n-|\mathbb{K}|^{i-1}\right) $. And the result follows.
\end{proof}

\begin{remark about finite fields}\
	\begin{enumerate}[(1)]
		\item So we know that for each field $\mathbb{K}$ and $n\in\mathbb{N}$, Schur--Weyl duality holds for $d=1, \ldots, |\mathbb{K}|-1$. It fails for $d\geq d_0$, where $d_0$ is the minimum natural number that satisfies\linebreak $\binom{n^2+d_0-1}{n^2-1}>\overset{n}{\underset{i=1}{\prod}}\left( |\mathbb{K}|^n-|\mathbb{K}|^{i-1}\right)$. It remains unknown what happens to Schur--Weyl duality for $d=|\mathbb{K}|, \ldots, d_0-1$.
		\item Considering $\mathbb{K}=\mathbb{F}_2$ and $n=2$, Schur--Weyl duality holds for $d=1$ and fails for $d\geq 2$.
	\end{enumerate}
\end{remark about finite fields}

\begin{schur weyl duality integers} \label{schur weyl duality integers}
	Let $n$ and $a$ be natural numbers such that $	\binom{n^2+a-1}{n^2-1}>\underset{i=1}{\overset{n}{\prod}}\left( 2^n-2^{i-1}\right) .$
	
	If $d\geq a$ then the homomorphism $\rho\colon\mathbb{Z} GL_n(\mathbb{Z})\rightarrow S_{\mathbb{Z}}(n, d)$ is not surjective.
\end{schur weyl duality integers}
\begin{proof}
	Suppose $n, d$ in the above conditions. The ring $\mathbb{Z}$ is a Euclidean ring, so the elements $E_{i, j}(t)$,\linebreak $1\leq i\neq j\leq n$, $t\in \mathbb{Z}$ and $E_{i, i}(t)$, $1\leq i\leq n$, $t\in \{1, -1\}$ generate the group $GL_n(\mathbb{Z})$ (see for example \citep{MR0207856}, \citep[Section~9.1.3]{serre2002matrices}).

	 For each commutative ring $R$, we will denote by $\alpha\!\colon\! R\otimes_{\mathbb{Z}}\! \left( \mathbb{Z}^n\right)^{\otimes^d}\!\rightarrow\! \left( R^n\right)^{\otimes^d}\!$ and $\beta\!\colon\! R\otimes_{\mathbb{Z}} \!S_{\mathbb{Z}}(n, d)\!\rightarrow\! S_R(n,d)$ the canonical $R$-isomorphisms.
	
	It is clear that $\beta\left( \left( \mathrm{id}_{\mathbb{F}_2}\otimes_{\mathbb{Z}}\rho_{\mathbb{Z}}\right) (1\otimes E_{i, i}(s)) \right) = \rho_{\mathbb{F}_2}\left( I_n\right)  $, for $i=1, \ldots, n$ and $s\in \{1, -1\}.$
	
	Fix $t\in \mathbb{Z}$ and $1\leq i\neq j\leq n$.
	\begin{align*}
	\beta&( ( \mathrm{id}_{\mathbb{F}_2}\otimes\rho_{\mathbb{Z}})  (1\otimes E_{i, j}(t)))(e_{i_1}\otimes\cdots\otimes e_{i_d})
	=\alpha\left( \left( \mathrm{id}_{\mathbb{F}_2}\otimes\rho_{\mathbb{Z}}\right) (1\otimes E_{i, j}(t))(1\otimes e_{i_1}\otimes\cdots\otimes e_{i_d}\right)\\
	&=\alpha\left(1\otimes E_{i, j}(t)(e_{i_1})\otimes\cdots\otimes E_{i, j}(t)(e_{i_d}) \right)
	=(e_{i_1}+\delta_{j, i_1}te_i)\otimes\cdots\otimes (e_{i_d}+ \delta_{j, i_d}te_i)
	\\&=\begin{cases}
	e_{i_1}\otimes\cdots\otimes e_{i_d},& \ \text{ if } t=2k, \ k \in \mathbb{Z}\\
	(e_{i_1}+\delta_{j, i_1}e_i)\otimes\cdots\otimes(e_{i_d}+\delta_{j, i_d}e_i),& \ \text{ if } t=2k+1, \ k\in \mathbb{Z}
	\end{cases}.   
	\end{align*}
	Therefore, $\beta\left( \left( \mathrm{id}_{\mathbb{F}_2}\otimes\rho_{\mathbb{Z}} \right) (1\otimes E_{i, j}(t))\right)=\rho_{\mathbb{F}_2}(I_n)$ or $\beta\left( \left( \mathrm{id}_{\mathbb{F}_2}\otimes\rho_{\mathbb{Z}}\right)  (1\otimes E_{i, j}(t))\right)=\rho_{\mathbb{F}_2}(E_{i, j}(1))$.
	So, \begin{align*}
	\beta(\left( \mathrm{id}_{\mathbb{F}_2}\otimes \rho_{\mathbb{Z}}\right) (\mathbb{F}_2\otimes\mathbb{Z}GL_n(\mathbb{Z})) )\subset \rho_{\mathbb{F}_2}(\mathbb{F}_2GL_n(\mathbb{F}_2)).
	\end{align*}
	Thus, if $\rho_{\mathbb{Z}}$ is surjective then $\mathrm{id}_{\mathbb{F}_2}\otimes \rho_{\mathbb{Z}}$ is surjective. This would imply that $S_{\mathbb{F}_2}(n, d)\subset \rho_{\mathbb{F}_2}(\mathbb{F}_2GL_n(\mathbb{F}_2))$, that is, $\rho_{\mathbb{F}_2}$ is surjective, which is a contradiction.
\end{proof}

\begin{remark about integers}\
	\begin{enumerate}[(1)]
		\item The study of Schur--Weyl duality over the integers when $n=2$ is complete: As we have seen before it holds for $d=1$, but fails for $d\geq 2$.
		\item The argument given in Theorem 2.1 of \citep{benson2009schur} is still true for all commutative rings, so in order to find more situations where Schur--Weyl duality does not hold using the map $\psi$ one must look for situations where $d\geq n$.
		\item As last observation, we can see that the surjectivity of  $\psi\colon RS_d\rightarrow End_{RGL_n(R)}\left( \left( R^n\right)^{\otimes^d}\right)$ is not enough for Schur--Weyl duality to hold. In fact, this follows from \ref{schur weyl duality integers} (when $n=d=2$) and applying $R=\mathbb{Z}$ in the next example.
	\end{enumerate}
\end{remark about integers}

\begin{integraldomainspsi}
	Let $R$ be an integral domain with characteristic different from two. Then the homomorphism $\psi\colon RS_2 \rightarrow End_{RGL_2(R)}((R^2)^{\otimes ^2})$ is surjective.
\end{integraldomainspsi}
\begin{proof}
	Consider $t\in End_{RG}\left( (R^2)^{\otimes^2} \right)$. Let $(e_i)_{i=1, 2}$ be the canonical basis of $R^2$.
	
	Thus, there are coefficients $t_{i_1, i_2}^{j_1, j_2}\in R$ such that
	$$t(e_{j_1}\otimes e_{j_2})=\sum_{i_1, i_2=1}^2 t_{i_1, i_2}^{j_1, j_2} e_{i_1}\otimes e_{i_2}, \quad 1\leq j_1, j_2\leq 2. $$
	
	For each element $g=[g_{i, j}]\in G$, we have $ge_j=\underset{i=1}{\overset{2}{\sum}} g_{i, j}e_i$, $j=1, 2$.
	
	We note that for any pair $1\leq j_1, j_2\leq 2$,
	
	\begin{align*}
	g(t(e_{j_1}\otimes e_{j_2})) &= g\left( \sum_{i_1, i_2=1}^{2} t_{i_1, i_2}^{j_1, j_2} e_{i_1}\otimes e_{i_2} \right) 
	=\sum_{i_1, i_2=1}^{2} \sum_{k_1, k_2=1}^2  t_{i_1, i_2}^{j_1,  j_2}  g_{k_1, i_1} g_{k_2, i_2} e_{k_1}\otimes e_{k_2}
	\intertext{and}
	t(g(e_{j_1}\otimes e_{j_2})) &= t\left( \sum_{i_1, i_2=1}^2 g_{i_1, j_1} g_{i_2, j_2} e_{i_1}\otimes e_{i_2} \right)  = \sum_{i_1, i_2=1}^2 \sum_{k_1, k_2=1}^2 g_{i_1, j_1} g_{i_2, j_2} t_{k_1, k_2}^{i_1, i_2} e_{k_1}\otimes e_{k_2}.
	\end{align*}
	
	Hence,\begin{align}
	\sum_{i_1, i_2=1}^{2}  t_{i_1, i_2}^{j_1, j_2}  g_{k_1, i_1} g_{k_2, i_2} = \sum_{i_1, i_2=1}^2  g_{i_1, j_1} g_{i_2, j_2} t_{k_1, k_2}^{i_1, i_2}, \quad 1\leq k_1,  k_2\leq 2. \label{polinomio carter}
	\end{align}
	
	We claim that $t_{2, 2}^{1, 2}=0$. This is obtained through the following observations:
	
	In equation $(\ref{polinomio carter})$, consider $g=\begin{bmatrix}
	1 & 1\\ 0 & 1
	\end{bmatrix}\in GL_2(R)$ and the following cases:
	\begin{itemize}
		\item Fix $(k_1, k_2)=(2, 2)$ and $(j_1, j_2)=(2, 1)$. The left hand side of $(\ref{polinomio carter})$ is $t_{2, 2}^{2, 1}$, whereas the right hand side is $t_{2, 2}^{2, 1}+t_{2, 2}^{1, 1}$. Then $t_{2, 2}^{1, 1}=0$.\\[-\baselineskip]
		\item Fix $(k_1, k_2)=(2, 2)$ and $(j_1, j_2)=(2, 2)$. The left hand side of $(\ref{polinomio carter})$ is $t_{2, 2}^{2, 2}$, whereas the right hand side is $t_{2, 2}^{1, 2}+ t_{2, 2}^{2, 1}+ t_{2, 2}^{2, 2}$. Then $t_{2, 2}^{1, 2}+ t_{2, 2}^{2, 1}=0$.\\[-\baselineskip]
		\item Fix $(k_1, k_2)=(2, 1)$ and $(j_1, j_2)=(1, 2)$. The left hand side of $(\ref{polinomio carter})$ is $t_{2, 1}^{1, 2}+ t_{2, 2}^{1, 2}$, whereas the right hand side is $t_{2, 1}^{1, 1}+ t_{2, 1}^{1, 2}$. Then $t_{2, 2}^{1, 2}= t_{2, 1}^{1, 1}$.\\[-\baselineskip]
		\item Fix $(k_1, k_2)=(2, 1)$ and $(j_1, j_2)=(2, 1)$. The left hand side of $(\ref{polinomio carter})$ is $t_{2, 1}^{2, 1}+ t_{2, 2}^{2, 1}$, whereas the right hand side is $t_{2, 1}^{1, 1}+ t_{2, 1}^{2, 1}$. Then $t_{2, 2}^{2, 1}= t_{2, 1}^{1, 1}$.
	\end{itemize}
	
	Therefore,
	\begin{align}
	0=t_{2, 2}^{1, 2}+ t_{2, 2}^{2, 1} = t_{2, 2}^{1, 2} + t_{2, 1}^{1, 1} = t_{2, 2}^{1, 2} + t_{2, 2}^{1, 2} = 2 t_{2, 2}^{1, 2}.
	\end{align}
	By assumption, $R$ is an euclidian domain and since $2\neq 0$ it follows that $t_{2, 2}^{1, 2}=0$.
	
	Using the same arguments with the matrix $g=\begin{bmatrix}
	1 & 0\\ 1 & 1
	\end{bmatrix}\in GL_2(R)$, we obtain $t_{1, 1}^{1, 2}=0$. Hence, $t_{i_1, i_2}^{1, 2}=0$ if $(i_1, i_2)\nsim (1, 2)$.
	
	Therefore, defining $ t_{\sigma} :=t_{\sigma^{-1}(1), \sigma^{-1}(2)}^{1, 2}$, we obtain
	\begin{align}
	t(e_1\otimes e_2)= \sum_{\sigma\in S_2} t_{\sigma^{-1}(1), \sigma^{-1}(2)}^{1, 2} e_{\sigma^{-1}(1)}\otimes e_{\sigma^{-1}(2)} = \sum_{\sigma \in S_2} t_{\sigma} \sigma (e_1\otimes e_2)
	\end{align}
	Now, since the value of $t$ on $e_1\otimes e_2$ determines the value of $t$ on any element basis $e_{j_1}\otimes e_{j_2}$ (see proof of \citep[Theorem 2.1.]{benson2009schur}) we conclude that $t=\psi\left( \underset{\sigma\in S_2}{\sum} t_{\sigma}\sigma\right) \in \psi(RS_2)$. So, $\psi$ is surjective.
\end{proof}

\newpage
\section*{Acknowledgments}
All these results are part of my master's thesis \emph{Schur--Weyl duality} written in Portuguese and defended in June 2017 at the University of Coimbra. I would like to thank my master's thesis advisors Doctor Ana Paula Santana and Doctor Ivan Yudin for all the guidance provided and their valuable comments on this work.

\Address
\end{document}